\documentclass[
  a4paper, 
  reqno, 
  oneside, 
  11pt
]{amsart}

\usepackage[utf8]{inputenc}

\usepackage[dvipsnames]{xcolor} 
\colorlet{cite}{LimeGreen!50!Green}
\usepackage{tikz}  
\usetikzlibrary{arrows,positioning}
\tikzset{ 
  baseline=-2.3pt,
  text height=1.5ex, text depth=0.25ex,
  >=stealth,
  node distance=2cm,
  mid/.style={fill=white,inner sep=2.5pt},
}

\usepackage{lmodern}
\usepackage{tikz-cd}
\usepackage{amsthm, amssymb, amsfonts}

\usepackage{graphicx,caption,subcaption}
\usepackage{braket}  
\usepackage{microtype}

\usepackage[%
  bookmarks=true,			
  unicode=true,			
  pdftitle={connected2},		%
  pdfauthor={Elizabeth Gasparim}{},	%
  pdfkeywords={},	
  colorlinks=true,		
  linkcolor=Blue,			
  citecolor=cite,		
  filecolor=magenta,		
  urlcolor=RoyalBlue			
]{hyperref}				
\usepackage{cleveref}

\newtheoremstyle{mydef}
  {}		
  {}		
  {}		
  {}		
  {\scshape}	
  {. }		
  { }		
  {\thmname{#1}\thmnumber{ #2}\thmnote{ #3}}	


\theoremstyle{plain}	
\newtheorem{theorem}{Theorem} 
\newtheorem{lemma}[theorem]{Lemma} 

\newtheorem*{theorem*}{Theorem}

\theoremstyle{mydef} 

\newtheorem*{conjecture*}{Conjecture}
\theoremstyle{remark}
\newtheorem{remark}[theorem]{Remark}
\newtheorem{notation}[theorem]{Notation}

\DeclareMathOperator{\Pic}{Pic}

\DeclareMathOperator{\End}{End}
\DeclareMathOperator{\SL}{\mathrm{SL}}
\DeclareMathOperator{\rk}{rk}

\newtheorem*{proposition*}{Proposition}
\newtheorem*{notation*}{Notation}
\newtheorem*{lemma*}{Lemma}
\newtheorem*{corollary*}{Corollary}
\theoremstyle{definition}
\theoremstyle{remark}

\newcommand{\ce}{\mathrel{\mathop:}=}

\address{Universidad Cat\'olica del Norte, Chile}
\author{Elizabeth Gasparim}
\email{etgasparim@gmail.com}

\title[Connected moduli of instantons on $S^3\times S^1$ ]{Connected moduli of instantons on $S^3\times S^1$ }
\begin{document}

\begin{abstract} I prove connectedness of the moduli space  $\mathcal M_n$ 
of $SU(2)$ instantons on $S^3\times S^1$ with charge $n$.
\end{abstract}
\maketitle
\tableofcontents

In \cite[Sec.\thinspace 2.1.4]{W} Witten asks whether the moduli spaces of instantons 
on  $S^3 \times S^1$ are connected. The purpose of this short note is to answer this question in 
the affirmative. 
The real 4-manifold $S^3 \times S^1$ admits the structure of a complex surface called a Hopf surface, 
which we denote by $X$. 
Using \cite[Thm.\thinspace 1]{Bu}, the moduli space of $SU(2)$ instantons on $S^3 \times S^1$
of charge $n$ can be identified with the moduli space of stable $SL(2,\mathbb C)$ bundles on $X$ with $c_2=n$.
 We denote by $\mathcal M_n$  the moduli space of  stable rank 2 vector bundles on the 
 classical Hopf surface $X$
 with $c_2=n$ and trivial determinant. 
 The goal here is to prove the 
connectedness of  $\mathcal M_n$. I assume that  all
components contain at least one regular bundle.

 We recall a vector bundle on the Hopf surface  is called {\bf regular} when its restriction to each fibre
  of $\pi\colon X\rightarrow \mathbb P^1$ has 
 automorphism group of  minimal dimension, that is $2$. Regular bundles are generic.
 If there is $x\in \mathbb P^1$ for which $E\vert_{\pi^{-1}(x)}$ has automorphism group of 
 dimension 4, then $E$ is called {\bf irregular}, and we say that $E\vert_{\pi^{-1}(x)}$ is an irregular fibre of $E$.
Each bundle $E$ has at most finitely many irregular fibres, these happen at $x_i\in \mathbb P^1$
such that $E\vert_{\pi^{-1}(x_i)}= L \oplus L $ with $L^2=\mathcal O$.
 
 Sections 1 and 2 are summaries of the basic results in the literature about Hopf surfaces and 
 vector bundles on them, and establish some notation. In section 3, I argue that 
 the dense open subset formed by regular bundles with no jumps is connected,
 having locally trivial fibration:

\[
\begin{tikzcd}[swap]
	T^{2n-1} \arrow[]{r}[] {}
	& S_0^{reg}  \arrow[left]{d}[]{}   \\  
		& U 
\end{tikzcd}
\]
where $ S_0^{reg} \subset \mathcal M_n $ and 
	$U  \subset \mathbb P^{2n+1}$  are inclusions of  open sets.
So, being the closure of a connected space, $\mathcal M_n$ is connected.

A quick summary of the proof is as follows: 
 Let  $\mathcal M_n$ denote the moduli space of stable 
$SL(2,\mathbb C)$ 
vector bundles on $X$
with $c_2=n$. The open dense subset $S_0^{reg} \subset \mathcal M_n$  consisting of 
regular bundles with no jumps 
admits the structure of a locally trivial  fibration $S_0^{reg} \rightarrow U \subset \mathbb P^{2n+1}$
where $U$ is   open and dense in $\mathbb P^{2n+1}$, and whose  fibres are tori $T^{2n-1}$;
therefore $S_0^{reg}$ is connected.  Consequently, $\mathcal M_n$,
being contained in 
the closure of the connected space $S_0^{reg}$, is also connected.

\section{Classical Hopf surfaces}
A classical Hopf surface is an elliptically  fibred surface $\pi\colon X\to \mathbb P^1$ 
which is diffeomorphic to $S^1\times S^3$. In  further details,
a {\bf Hopf surface} is  a quotient of the punctured plane $\mathbb C^2\setminus \{0\}$ 
by the action of an infinite cyclic group generated by a contraction of $(\mathbb C^2,0)$. 
When the contraction is multiplication by a diagonal matrix, then the Hopf surface is called {\bf diagonal}.
Hence, a diagonal Hopf surface
is the quotient of $\mathbb{C}^2 \backslash \{ 0 \}$ 
by a cyclic group action generated by
$$(z_1,  z_2)  \mapsto  (\mu_1 z_1,  \mu_2 z_2)$$
where $\mu_1,  \mu_2 \in \mathbb C$ satisfy 
$0 < | \mu_1 | \leq | \mu_2 | < 1$.

Every diagonal Hopf surface is diffeomorphic to $S^3 \times  S^1$, in particular it is non-K\"ahler
and non-algebraic.
 We consider here  the classical case when the contraction is a multiple of the identity, 
 implying that the surface admits an elliptic fibration.
 
A diagonal Hopf surface is called {\bf classical} when $\mu_1 = \mu_2=\mu$.
Hence, a classical Hopf surface $X$ is generated by an action $(z_1,  z_2)  \mapsto  (\mu z_1,  \mu z_2)$ 
and admits a natural holomorphic elliptic fibration 
\[ \begin{array}{rcl}
\pi\colon X & \rightarrow & \mathbb{P}^1 \\
(z_1,  z_2) & \mapsto & [ z_1 :  z_2],
\end{array} \]
with fibre the elliptic curve $T = \mathbb{C}^\ast/\mu$.\\

\section{Vector bundles on classical Hopf surfaces}\label{rank2}

Given a classical Hopf surface $X$, the Picard group of all holomorphic line bundles on $X$ is given by 
$$\Pic(X) = \frac{H^1(X, \mathcal O)}{H^1(X, \mathbb Z)}= \frac{ \mathbb C}{\mathbb Z}= \mathbb C^*,$$
and since $H^2(X, \mathbb Z)=0$, 
 every line bundle on $X$ has zero first Chern class, hence 
$\Pic^0(X) = \Pic(X)=  \mathbb C^*.$  
The relative Jacobian of a classical Hopf surface $X \stackrel{\pi}{\rightarrow} \mathbb{P}^1$ 
is isomorphic to
\[ J(X) = \mathbb{P}^1 \times T^\ast \stackrel{p_1}{\rightarrow} \mathbb{P}^1, \]
where $T^\ast  \ce \Pic^0(T) $ is the dual elliptic curve determined by a non-canonical identification to $T$.
 Holomorphic vector bundles on elliptically fibred Hopf surfaces were described in \cite{BMo,FMW, Mo1,Mo2}.\\
 
Let $X$ be a classical Hopf surface, $\pi \colon X\to \mathbb P^1$ its elliptic fibration, 
and  let $T$ be the smooth elliptic curve such that all fibres of $f$ are isomorphic to $T$.

  A metric on $X$ is called {\bf Gauduchon} if its associated $(1,1)$-form 
  $\omega$ satisfies $\partial \bar\partial \omega = 0$.
 The degree of a line bundle $L$ on $X$ is defined as 
$$ \deg(L) = \int_X F\wedge \omega $$
where $F$ is the curvature of a Hermitian connection on $L$ compatible with $\bar \partial_L$.
For a coherent sheaf $E$, de {\bf degree} is defined as 
$$\deg(E) = \deg\det(E)$$
and the {\bf slope} by 
$$\mu(E) = \frac{\deg(E)}{\rk(E)}.$$
Up to a scalar there is a unique Gauduchon metric on $X$  (we fix it)
 so that each $f^\ast({\mathcal O}_{\mathbb P^1}(t))$ has degree $t$. 
 Stability is then defined as usual, that is, 
 $E$ is {\bf stable} if any subsheaf $F \hookrightarrow E$ satisfies
 $\mu F < \mu E$. \\
 
 We will focus on $\SL(2, \mathbb C)$ bundles, therefore 
 $\Lambda^2 E = \mathcal O$ and $\deg E = 0$.

\begin{notation*} Let ${\mathcal  M}_n$ denote  the moduli space of  stable
  holomorphic vector  bundles $E$ on $X$ with rank $2$, 
trivial determinant  
and $c_2(E) =n$.  
\end{notation*}

By \cite{Bu} we know that the moduli space  $\mathcal M_n$ is diffeomorphic with 
the moduli of $\SL(2, \mathbb C)$ instantons  on $X$, and  \cite[Prop. 3.4.4]{BH} proved that
 ${\mathcal  M}_n$ is a complex smooth manifold of dimension $4n$ ($8n$ real dimensions).\\
 
We want to argue that there exists a non-empty and dense  open subset 
 $\mathcal M_n^{reg}$ of ${\mathcal  M}_n$ such that for each $E\in \mathcal M_n^{reg}$ 
 its restriction $\End(E\vert_{\pi^{-1}(x)})$ 
 has minimal dimension (i.e. 2) for all fibres of $\pi$, that is, for all  $x\in \mathbb P^1$. \\

   We recall a few essential facts about bundles on $X$. Since $X$ is elliptically fibred, 
   we start with the description 
   of vector bundles on an elliptic curve. 
  Atiyah \cite{At} proved that an $SL(2,\mathbb C)$-bundle over an elliptic curve $T$
  is of one of three possible types: 
  \begin{itemize}
  \item[$\iota)$] $L_0\oplus L_0^*$, with $L_0\in \Pic^0(T)$.
  \item[$\iota\iota)$] Non-trivial extensions $0\rightarrow L_0\rightarrow E \rightarrow L_0$, with $L_0^2=\mathcal O$.
  \item[$\iota\iota\iota)$] $L\oplus L^*$, with $L\in \Pic^{k}(T)$,  $k<0$.
  \end{itemize}
  \vspace{3mm}

If   $E$ is an $SL(2,\mathbb C)$-bundle over the Hopf surface $\pi\colon X\rightarrow \mathbb P^1$, 
 then  its restriction to a generic fibre of $\pi$ is either of type $\iota)$ or or type  $\iota\iota)$. Furthermore, 
  the restriction of $E$ is of type $\iota\iota\iota)$ at finitely many fibres \cite[Prop\thinspace3.2.2]{BH}; 
  named the {\bf jumping fibres} of $E$. Hence a jumping fibre $T_x$ of $E$ is one where 
  $E\vert_{T_x} \simeq L\oplus L^*$ with $\deg(L) \neq 0$. \\
   
   To identify those points $x\in \mathbb P^1$ where $E\vert_{\pi^{-1}(x)}$ jumps, it is useful to pick an 
   auxiliary line bundle $V$ over $X$ such that $h^0(\pi^{-1}(x), V^*E) = 0$ for some $x$, and thus
   for generic $x$. Consequently, $\pi_*(V^*E) =0$ and $R^1\pi_*(V^*E)$ is a skyscraper sheaf 
   supported at $n$ points counted with multiplicity, such that, if $h$ is the positive generator 
  of $H^2(\mathbb P^1, \mathbb Z) $ then $\mbox{ch}(R^1\pi_*(V^*E)) = nh$. To define the 
  {\bf multiplicity} of a point $z=0$ one takes a local resolution 
  \begin{equation}\label{multi}
  0 \longrightarrow \mathcal O^m \stackrel{\,f(z)\,}{\longrightarrow} \mathcal O^m \longrightarrow R^1\pi_*(V^*E) \longrightarrow 0\end{equation}
  and the multiplicity is that of the zero of $\det(f(z))$ at $z=0$.

\section{Connectedness  of the moduli space $\mathcal M_n$}

 We consider a stratification of  the moduli space $\mathcal M_n$  according to the number of jumps
counted with multiplicities,
$$\mathcal M_n= \cup_{i=0}^n S_i$$
where 
$S_0 $  is the generic stratum formed by bundles with zero jumps, and 
$S_i$ contains $i$  jumps counted with  multiplicities. We then have 
$\dim S_0 = 4n$ and $S_0$ is open and dense in $\mathcal M_n$. 
Being the generic stratum, $S_0$ is  the most relevant subset  for the argument 
we intend to make here. 
Observe that  having a jump is a closed condition, therefore 
$\cup_{i=1}^n S_i$ is a closed subset of $\mathcal M_n$. 
Observe that  having a jump is a  closed condition, therefore 
$\cup_{i=1}^n S_i$ is a closed subset of $\mathcal M_n$.
Furthermore, since the support of $R^1\pi_*(V^*E)$ consists of finitely many points, 
from the local resolution defining the multiplicity of a jump, 
we see that bundles with jumps can be expressed as limits of bundles without jumps.
Thus, we have $\overline{S_0} = \mathcal M_n$, and therefore, to show that $\mathcal M_n$ is connected
it suffices to prove that $S_0$ is connected. 

\begin{remark} We observe that here by $\overline{S_0}$ 
we mean the closure inside $\mathcal M_n$, 
sometimes referred to as a partial closure. One could 
obtain a larger closure by adding semistable bundles, for  instance, if one is
attempting to compactify $\mathcal M_n$. However,   we do not consider 
here semistable bundles which are not stable.
\end{remark}

We will use the fact that to each element of $S_0$ 
there corresponds the graph of a rational map $\mathbb P^1\rightarrow \mathbb P^1$ 
of degree  $n$. Indeed, \cite[Sec.\thinspace 3]{BH} associated to each  bundle $E \in \mathcal M_n$ 
 a divisor in the linear system $|\mathcal O(n,1)|$ over $\mathbb P^1\times \mathbb P^1$,
defined as follows.

Given a rank 2 bundle $E$ on $X$, let $V$ be the universal Poincaré line bundle on
$X \times \mathbb C^*$ and consider the first derived image $R^1\pi'_*(E \otimes V)$ where
$\pi'$  the projection $\pi'= \pi\times id \colon X\times \mathbb C^* \to \mathbb P^1 \times \mathbb C^*$. 
The sheaf $R^1\pi'_*(E \otimes V)$ is supported on a divisor on $ \mathbb P^1\times \mathbb C^*$, 
which descends to a divisor 
$$D \subset \mathbb P^1\times \mathbb P^1 = \pi(X) \times \Pic^0(T)/\pm1.$$ When $c_2(E)=n$, the divisor
 $G(E) \ce D $ belongs to the linear system $|\mathcal O(n,1)|$ over $\mathbb P^1\times \mathbb P^1$
 and is called the {\bf graph} of $E$.
 
 Note that here there is a 2-sheeted map $\Pic^0(T) \rightarrow \mathbb P^1$, whose 
 branch points occur at the half-periods (i.e. $ L\oplus L,$ with $L \in \Pic^0(T)$ and  $L^2=0$).

If a bundle has $k$ jumps, then
its graph takes the  form $$\sum_{i=1}^k (x_i \times\mathbb P^1) + Gr(F)$$ where $x_i$ 
are the locations of the jumping lines of $X$ and $F\colon \mathbb P^1\rightarrow \mathbb P^1$ is a
rational map of degree $n-k$.  \cite{BH} also showed that if this divisor includes the 
graph of a non-constant rational map, then the bundle $E$ is stable. In particular, 
this implies:

\begin{remark} $SL(2,\mathbb C)$ bundles on $X$  with $c_2>0$  and without jumps are stable. 
\end{remark}

For $n=1$ the image of $G$ is $\mathbb P^3\setminus (\mathbb P^1\times I)$, and for
$n>1$ the map $G$ is surjective \cite[Thm.\thinspace 5.2.2]{BH}.
Therefore $G(\mathcal M_n)$ accounts for $2n+1$ dimensions, and 
 the complementary $2n-1$ dimensions 
that fill out  this generic stratum are formed by the fibres of the map $G$.
 
 Consider a bundle $E \in \mathcal M_n$ that has no jumps.  In this case, the image $G(E)$ is the graph of a 
 rational (equivalently, holomorphic)  map $\mathbb P^1 \rightarrow \mathbb P^1$ of degree $n$. 
 To each  such  bundle $E$  on the Hopf surface $X$, there corresponds a {\bf spectral curve} 
 $S \subset \mathbb P^1 \times T^*$, that  is, a double cover of the graph $S\rightarrow G(E)$ invariant 
 under the involution,
  and a line bundle on $S$.  Moreover, \cite[Sec.\thinspace 3]{Mo1}  
  proves that $E$ is completely determined by these two objects,
  provided that $E$ is regular.
 Furthermore, 
  the spectral curve $S$ associated to a bundle $E$  with $c_2(E)=n$ is a double cover of a smooth divisor
  $D \simeq \mathbb P^1$ with $4n$ branch points. 
   Such a curve $S$ is given by an equation $y^2 = F(x)$, where $(x, y)$ are local coordinates in 
   $\mathbb P^1 \times T^*$; and the Riemann--Hurwitz formula implies that $S$ is 
   a compact Riemann surface of genus $2n -1$. Thus, if $n = 1$, the curve $S$ is elliptic, 
   and if $n \geq  2$, it is hyperelliptic.
   \cite{Mo1} also shows that the fibre of the graph map over a  generic bundle $E \in \mathcal M_n$
is isomorphic to the Jacobian variety of its corresponding spectral curve $S$. (In such statement, a bundle is 
called generic when it is  regular on every fibre of $X \mapsto \mathbb P^1$; hence when its
 automorphism group has minimal dimension, that is $2$, on every fibre.)

\begin{notation} We  denote by  $S_0^{reg}$ the subset of $\mathcal M_n$
formed by bundles without jumps which in addition are regular.
\end{notation}

Observe that  irregular bundles   in $S_0$ happen precisely when 
the graph is smooth but the spectral curve is singular, and this is a closed condition,
cut out locally by the condition $\det(f)=0$ as in \eqref{multi}.
Since the condition of a bundle being irregular is defined in codimension 1, $S_0^{reg}$ is an open  subset 
of $\mathcal M_n$, therefore if $S_0^{reg}$ is connected, so is $\mathcal M_n$. 

\begin{remark} By analogy with the principles of geometric invariant theory,
we assume that each irreducible component of the moduli space must contain 
at least one regular bundle (by definition).  
Any component formed entirely of irregular bundles, if it were to exist, would not form part of $\mathcal M_n$.
\end{remark} 

To prove connectedness of $S_0^{reg}$,   we
 use the fact that for each bundle $E \in S_0$ the graph $G(E)$ is smooth, and 
furthermore if $E \in S_0^{reg}$ then, in addition, its spectral curve is smooth.\\

Now, when restricting to the subset of regular
elements $S_0^{reg}$ of the stratum $S_0$, we then see that the graph map
 gives $S_0^{reg}$  the structure of a 
locally trivial fibration, whose fibre over $E$ is the Jacobian $J(S)$ of  the smooth spectral curve of $E$.
Since $S$ has degree $2n-1$, we have $\dim J(S) = 2n-1$, accounting for the complementary 
dimensions  we were looking for. 
In summary, we conclude:

\begin{lemma}\label{le}
The restriction of the graph map to the generic (regular) stratum  
$S_0^{reg} \subset \mathcal M_n$ has the structure of a locally trivial
fibration  $S_0^{reg} \rightarrow G(S_0^{reg}) = U$, where  
$U \subset |\mathcal O(n,1)| = \mathbb P^{2n+1}$ is Zariski open 
 and whose fibre over $G(E)$ is the Jacobian variety $J(S)$ of the smooth spectral curve $S$ of
 the bundle  $E$.
\end{lemma}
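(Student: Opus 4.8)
The plan is to establish Lemma~\ref{le} by assembling the pieces already recalled from \cite{Mo1,BH}, verifying that the spectral-curve construction varies holomorphically in families and that the fibres are precisely Jacobians. I would organize the argument in four steps.

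\emph{Step 1: The graph map lands in a Zariski open subset.} First I would recall from \cite[Thm.\thinspace 5.2.2]{BH} that for $n>1$ the graph map $G\colon \mathcal M_n \to |\mathcal O(n,1)| = \mathbb P^{2n+1}$ is surjective (and for $n=1$ its image is $\mathbb P^3 \setminus (\mathbb P^1 \times I)$). A divisor $D \in |\mathcal O(n,1)|$ that is the graph of a degree-$n$ rational map $\mathbb P^1 \to \mathbb P^1$ is a smooth rational curve; the locus of such $D$ is exactly the complement of the discriminant hypersurface (divisors containing a vertical component $x_i \times \mathbb P^1$ or otherwise singular), hence Zariski open. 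Restricting further, I would show that the condition ``$E$ is regular'', which by the discussion before the lemma is equivalent to smoothness of the spectral curve $S$, is cut out on this open set by the nonvanishing of the discriminant of $y^2 = F(x)$, hence again Zariski open. Call the resulting open subset $U$; then by construction $G(S_0^{reg}) = U$.

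\emph{Step 2: The fibres are Jacobians.} Over $U$, each point corresponds to a smooth graph $D \simeq \mathbb P^1$ together with the data of a double cover $S \to D$ branched at the $4n$ branch points and a line bundle on $S$; by \cite[Sec.\thinspace 3]{Mo1}, a regular bundle $E$ with no jumps is completely determined by $(S, \mathcal L)$ with $\mathcal L$ a line bundle of the appropriate fixed degree on $S$. Two line bundles give isomorphic $E$ precisely when they differ by the relevant normalization, so the fibre $G^{-1}(D) \cap S_0^{reg}$ is a torsor under $\Pic^0(S) = J(S)$; since the $SL(2,\mathbb C)$ (trivial determinant) constraint pins down a distinguished component and rigidifies the torsor, the fibre is isomorphic to $J(S)$. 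By Riemann--Hurwitz, $g(S) = 2n-1$, so $\dim J(S) = 2n-1$, which matches $\dim S_0^{reg} - \dim U = 4n - (2n+1) = 2n-1$, confirming the dimension count.

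\emph{Step 3: Local triviality.} I would construct, over a Zariski (or analytic) neighborhood $U' \subset U$ of any point, a universal spectral curve $\mathcal S \to U'$ — the family of double covers $y^2 = F_u(x)$ with $F_u$ varying algebraically — and then form its relative Jacobian $\operatorname{Jac}(\mathcal S/U') \to U'$, which exists as a smooth abelian scheme because the $\mathcal S_u$ are smooth of constant genus. The universal bundle construction of \cite{Mo1} produces a family of bundles on $X$ parametrized by $\operatorname{Jac}(\mathcal S/U')$, inducing a map $\operatorname{Jac}(\mathcal S/U') \to S_0^{reg}$ over $U'$; by Step 2 this is a fibrewise isomorphism, hence (both sides being smooth of the same dimension over $U'$) an isomorphism. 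This exhibits $S_0^{reg} \to U$ as locally trivial with fibre $J(S)$. The one subtlety is whether the relative Jacobian is \emph{globally} a fibre bundle versus merely locally so; since the statement only claims local triviality, it is enough to do this over neighborhoods, and smoothness of $\mathcal M_n$ (by \cite[Prop.\thinspace 3.4.4]{BH}) together with smoothness of $U$ guarantees the map is a submersion with the claimed fibres.

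The main obstacle I anticipate is \emph{Step 3}: making the spectral correspondence of \cite{Mo1} genuinely relative — i.e.\ producing a universal family of bundles over the relative Jacobian of the universal spectral curve and checking it induces an isomorphism onto $S_0^{reg}$ over $U$, rather than just a bijection on points. This requires that Mourougane's construction (twisting a line bundle on $S$ and pushing forward along $S \to D \hookrightarrow \mathbb P^1 \times T^*$, then descending to $X$) be carried out in families and that the resulting classifying map be étale; here one leans on the smoothness of $\mathcal M_n$ and a dimension count to upgrade ``bijective submersion'' to ``isomorphism''. The remaining steps (openness of $U$, Riemann--Hurwitz, the torsor structure of the fibre) are essentially bookkeeping given the cited results.
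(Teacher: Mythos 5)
Your proposal follows essentially the same route as the paper: exhibit $U = G(S_0^{reg})$ as a Zariski-open (hence connected) subset of $|\mathcal O(n,1)| = \mathbb P^{2n+1}$ and identify the fibres of $G$ with the Jacobians of the smooth spectral curves via the spectral correspondence of \cite{Mo1}. The paper's own proof is only two sentences and simply asserts these facts, so your Steps 1--3 (in particular the relative-Jacobian argument for local triviality, and the distinction between ``smooth graph'' and ``smooth spectral curve'' in carving out $U$) supply strictly more detail than the original while taking the same approach.
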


\begin{proof}
We have that the subset 	$U$ consisting of all smooth graphs in $|\mathcal O(n,1)|$ 
  is  Zariski open in $\mathbb P^{2n+1}$, therefore it is connected. 	For each regular bundle 
  $E$ without jumps, the fibre over a point $G(E) \in U$ in the fibration 
\[
\begin{tikzcd}[swap]
	T^{2n-1} \arrow[]{r}[] {}
	& S_0^{reg}  \arrow[right]{d}[]{G}  \\
		& U 
\end{tikzcd} 
\]
is the smooth Jacobian $J(S)\simeq T^{2n-1}$ of the spectral curve of $E$. 
\end{proof}

Assuming that
any component contains a regular bundle, we obtain:

\begin{theorem}
$\mathcal M_n$ is connected.
\end{theorem}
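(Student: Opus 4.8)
The plan is to obtain the theorem as an essentially formal consequence of Lemma~\ref{le} together with the density statements already established. First I would reduce connectedness of $\mathcal M_n$ to connectedness of $S_0^{reg}$: since the closure of a connected topological space is connected, it suffices to know that $\overline{S_0^{reg}} = \mathcal M_n$. This in turn follows from the chain of dense inclusions $S_0^{reg} \subset S_0 \subset \mathcal M_n$, where $S_0$ is dense in $\mathcal M_n$ because every bundle with jumps is a limit of bundles without jumps, so that $\mathcal M_n = \overline{S_0}$, and $S_0^{reg}$ is dense in $S_0$ because the irregular locus inside $S_0$ is a proper complex-analytic subset, cut out locally by $\det(f) = 0$ as in \eqref{multi}, hence nowhere dense. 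Dense-in-dense being dense, $\overline{S_0^{reg}} = \mathcal M_n$.

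Next I would prove that $S_0^{reg}$ is connected, which is where Lemma~\ref{le} does the work. The lemma exhibits the graph map as a locally trivial fibration $S_0^{reg} \to U$ with $U \subset |\mathcal O(n,1)| = \mathbb P^{2n+1}$ a nonempty Zariski open set; a nonempty Zariski open subset of an irreducible variety is irreducible, so $U$ is connected. The fibre over $G(E)$ is the Jacobian $J(S) \simeq T^{2n-1}$ of the smooth spectral curve, a complex torus and in particular connected. A locally trivial fibre bundle whose base and fibre are both connected has connected total space: cover $U$ by trivializing connected opens, over each such open the preimage is connected, and connectedness propagates from chart to chart because the overlaps are nonempty over a connected base. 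Hence $S_0^{reg}$ is connected, and combined with the previous paragraph $\mathcal M_n = \overline{S_0^{reg}}$ is connected.

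Given the machinery assembled in the earlier sections, the genuinely delicate point --- and the one I would be most careful about --- is not the fibration argument but the claim that $\overline{S_0^{reg}}$ really exhausts all of $\mathcal M_n$ rather than a union of some of its connected components. The fibration only sees the single connected stratum $S_0^{reg}$, so a priori $\mathcal M_n$ could carry extra components on which every bundle is irregular, or jumps under every deformation, and such components would be invisible to the argument. This is exactly what the standing assumption rules out: each irreducible component of $\mathcal M_n$ contains a regular bundle, and since $\mathcal M_n$ is a smooth complex manifold its connected and irreducible components coincide, so on each component the dense open no-jump locus meets the dense open regular locus, and hence meets $S_0^{reg}$. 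I would flag explicitly that this hypothesis is load-bearing, and that deriving it from a direct geometric-invariant-theory or gauge-theoretic model of $\mathcal M_n$ would be the natural route to a fully unconditional statement.
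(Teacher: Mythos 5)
Your proposal is correct and follows essentially the same route as the paper: density $\overline{S_0^{reg}} = \mathcal M_n$ (via $S_0$ dense in $\mathcal M_n$ and the regular locus dense in $S_0$), connectedness of $S_0^{reg}$ from the locally trivial fibration of Lemma~\ref{le} over the connected Zariski-open base $U$ with connected torus fibres, and the fact that the closure of a connected set is connected. Your explicit flagging of the standing assumption that every component contains a regular bundle as load-bearing is a useful expansion of what the paper states only in passing, but it does not change the argument.
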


\begin{proof} 
Inside $\mathcal M_n$, we have  $ \overline{S_0^{reg}} = \mathcal M_n $. Thus, lemma \ref{le} implies that
$\mathcal M_n$ is connected.
\end{proof}

Note that this locally trivial fibration does not extend to a locally trivial fibration over 
the entire stratum $S_0$ because 
 the spectral curves of irregular bundles are singular. 
A posteriori, knowing that the 
moduli space $\mathcal M_n$ is connected, we conclude that the subset of all 
irregular bundles forms a divisor in $\mathcal M_n$, which I propose we call the {\bf irregular divisor}.

\section{Acknowledgments} I am very grateful to Edward Witten for enlightening discussions 
about instantons on $S^3\times S^1$. I am also grateful to Edoardo Ballico for providing many 
examples of irregular bundles on Hopf surfaces (details of their existence will appear 
soon in  \cite{BG}), and to Thomas Baird whose suggestions helped me improve 
 the preliminary version of this text. 
 I am a Senior Associate of the Abdus Salam International Centre for Theoretical Physics, Trieste.


\begin{thebibliography}{99}


\bibitem[At]{At}  M. F. Atiyah, {\it Vector bundles over an elliptic curve}, Proc. London Math. Soc. (3) {\bf 7} (1957)  414--452.

\bibitem[BG]{BG} E. Ballico, E. Gasparim, {\it Irregular bundles on Hopf surfaces}, preprint.

\bibitem[BMo]{BMo} V. Br\^{\i}nz\u{a}nescu, R. Moraru, {\it Stable bundles on non-K\"ahler elliptic surfaces},
Commun. Math. Phys. {\bf 254}  n.3   (2005) 565--580.



\bibitem[BH]{BH} P. J. Braam and J. Hurtubise,
{\it Instantons on Hopf surfaces and monopoles on solid tori}, J. Reine Angew. Math. {\bf 400} (1989)
146-172.


\bibitem[Bu]{Bu} N. P. Buchdahl, 
{\it Hermitian-Einstein connections and stable
vector bundles over compact complex surfaces}, Math. Ann. {\bf 280} (1988) 625-648.


\bibitem[FMW]{FMW} 
R. Friedman, J. Morgan, and E. Witten, 
{\it Vector bundles over
elliptic fibrations}, J. Algebraic Geom. {\bf 2} (1999) 279-401.


\bibitem[Mo1]{Mo1} R. Moraru, {\it Integrable systems associated to a Hopf surface}, Canad. J. Math. {\bf 55} n.3
(2003) 609--635.
\bibitem[Mo2]{Mo2} R. Moraru, {\it Stable bundles on Hopf manifolds}, arXiv:0408439.

\bibitem[W]{W} E. Witten, {\it Instantons and the large $N=4$ algebra}, J. Phys. A: Math. Theor. {\bf 58} (2025) 035403, arXiv:2407.20964.



\end{thebibliography}
\end{document}